\newtheorem{thm}{Theorem}
\newtheorem{lem}[thm]{Lemma}
\newtheorem{rmk}{Remark}
\begin{document}

\title{Note on the bondage number of graphs on topological surfaces}

\author{Vladimir Samodivkin \medskip\\ \small Department of Mathematics, UACEG,
 \\ \small Hristo Smirnenski Blv. 1, 1046 Sofia, Bulgaria 
 \\ \small E-mail: vl.samodivkin@uacg.bg}

\renewcommand{\today}{\small August 30, 2012}
\maketitle

\begin{abstract}
The bondage number $b(G)$ of a graph $G$ is the smallest number 
of edges whose removal from $G$ results in a graph with larger 
domination number. 
In this paper we present new upper bounds for $b(G)$ in terms of 
girth, order and Euler characteristic.
\end{abstract}

\noindent \small {\bf Keywords:} Bondage number, girth, genus, Euler characteristic.
\smallskip

\noindent \small {\bf MSC Classification:} 05C35 (Primary).

% \linenumbers

%% main text
\section{Introduction and main results}
\label{parva}
We shall consider graphs without loops and multiple edges. 
 %Multigraphs may have loops or multiple edges. 
An orientable compact 2-manifold $\mathbb{S}_h$ or orientable surface $\mathbb{S}_h$ (see \cite{Ringel}) of genus $h$ is
obtained from the sphere by adding $h$ handles. Correspondingly, a non-orientable compact
2-manifold $\mathbb{N}_k$ or non-orientable surface $\mathbb{N}_k$ of genus $k$ is obtained from the sphere by
adding $k$ crosscaps. The Euler characteristic is defined by
$\chi(\mathbb{S}_h) = 2 - 2h$, $h \geq 0$,  and $\chi(\mathbb{N}_k ) = 2 - k$, $k \geq 1$.
Compact 2-manifolds are called simply surfaces throughout the paper. 
 If a graph $G$ is embedded in a surface $\mathbb{M}$ then the connected components of $\mathbb{M} - G$
are called the faces of $G$. If each face is an open disc then the embedding is called a
2-cell embedding. 
For such a graph $G$, we denote its vertex set,
edge set, face set, maximum degree, and minimum degree by $V (G)$, $E(G)$, $F(G)$, $\Delta(G)$, and
$\delta(G)$, respectively. Set $|G| = |V(G)|$, $\|G\| = |E(G)|$, and $f (G) = |F(G)|$. We call $|G|$ and
$\|G\|$ the order and the size of $G$. 
For a 2-cell embedding in a surface $\mathbb{M}$ the (generalized) Euler's formula states
$|G| - \|G\| + f(G) = \chi(\mathbb{M})$ for any multigraph $G$ that is 2-cell embedded in $\mathbb{M}$ \cite[p. 85]{Ringel}. The Euclidean plane $\mathbb{S}_0$, the projective plane $\mathbb{N}_1$, the torus $\mathbb{S}_1$, and the Klein bottle $\mathbb{N}_2$ are all the surfaces of nonnegative Euler characteristic.
The degree of a face is the length of its boundary walk.
A  face of degree $i$ is called a $i$-face. 
For $i \geq 3$, let $f_i (G)$ denote the number of $i$ -faces in the embedded graph $G$.
We say that two  faces
 are intersecting or adjacent if they share a common vertex or a
common edge, respectively. 
The girth of a graph $G$, denoted as $g(G)$, is the length of a shortest
cycle in $G$. If $G$ has no cycle then $g(G) = \infty$. 

A dominating set for a graph $G$ is a subset $D\subseteq V(G)$ of 
vertices such that every vertex not in $D$ is adjacent to
at least one vertex in $D$. The minimum cardinality of a dominating 
set is called the domination number of $G$.
The concept of domination in graphs has many applications 
in a wide range of areas within the natural and social sciences.
One measure of the stability of the domination number of $G$ 
under edge removal is the bondage number $b(G)$, defined in \cite{FJKR}
(previously called the domination line-stability in \cite{BHNS}) 
as the smallest number of edges whose 
removal from $G$ results in a graph with larger domination number.
In general it is hard to determine the bondage number $b(G)$ 
(see Hu and Xu~\cite{HuXu}), and thus useful to find bounds for it. 

The main result of the paper is the following theorem. 
\begin{thm} \label{edno}
Let $G$ be a graph embeddable on a surface whose Euler
characteristic $\chi$ is as large as possible and let $g(G) = g < \infty$. 
\begin{itemize} 
\item[(i)] Then $b(G) \leq 3 + \frac{8}{g-2} - \frac{4\chi g}{|G|(g-2)}$.
\item[(ii)] If $G$ contains no intersecting $g$-faces, then 
$b(G) \leq 3 + \frac{8g+4}{g^2-g}-4(1+\frac{2}{g-1})\frac{\chi}{|G|}$.
\item[(iii)] If G contains no adjacent $g$-faces, then 
$b(G) \leq \frac{4g(g+1)}{g^2-g-1}(1-\frac{\chi}{|G|}) -1$.
\end{itemize}
\end{thm}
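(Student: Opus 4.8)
The starting point is the observation that all three right-hand sides are the single expression $2\bar d-1$, where $\bar d$ denotes the upper bound on the average degree $2\|G\|/|G|$ produced by the embedding; the three cases differ only through the value of $\bar d$ obtained under the three face hypotheses. So the plan splits cleanly into two halves: (a) bound the average degree using Euler's formula and the girth, and (b) turn such a bound into a bound on $b(G)$ via a local degree estimate.

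For half (a), write $n=|G|$, $m=\|G\|$, $f=f(G)$. The generalized Euler formula gives $n-m+f=\chi$, and since $g(G)=g$ every face of the $2$-cell embedding has degree at least $g$, whence $2m=\sum_F\deg(F)\ge g f$. Eliminating $f$ yields $m\le \frac{g}{g-2}(n-\chi)$, which is exactly the input for (i). For (ii) and (iii) I refine the count of $g$-faces. If no two $g$-faces intersect, their boundaries are vertex-disjoint, so $g f_g\le n$; if no two $g$-faces are adjacent, each edge lies on at most one $g$-face, so $g f_g\le m$. Splitting $2m=\sum_F\deg(F)\ge g f_g+(g+1)(f-f_g)$ and feeding the appropriate inequality together with Euler's formula produces the sharper edge bounds, whose denominators come out as $g^2-g=g(g-1)$ in case (ii) and $g^2-g-1$ in case (iii), matching the theorem.

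For half (b), the basic tool is the Hartnell--Rall estimate $b(G)\le \deg(u)+\deg(v)-1-|N(u)\cap N(v)|$, valid for every edge $uv$. The idea is to apply it at a vertex $v$ of minimum degree, so that $\deg(v)=\delta(G)\le \bar d$, and to a neighbour $u$ whose degree is also controlled by the sparsity from (a). When $g\ge 4$ the graph is triangle-free, so the correction term $|N(u)\cap N(v)|$ vanishes and the estimate simplifies; when $g=3$ the common neighbours correspond precisely to triangles, i.e.\ $3$-faces, incident to the edge, and the girth and face hypotheses again limit how many can occur.

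The step I expect to be the main obstacle is precisely this passage from an average-degree bound to a bound on one concrete edge. Averaging the Hartnell--Rall estimate over all edges only gives $b(G)\le \frac{1}{m}\sum_v \deg(v)^2-1$, and by Cauchy--Schwarz $\frac1m\sum_v\deg(v)^2\ge \frac{4m}{n}=2\bar d$, with equality only in the regular case; so for irregular graphs (already for $K_{2,3}$, where every edge yields only $b\le 4$ although the target is $2\bar d-1=3.8$) a single application of the edge bound is strictly too weak. Reaching the sharp constant therefore seems to require a finer local argument---optimizing over the whole neighbourhood of a minimum-degree vertex, or weighting the bondage estimate by the local face structure---rather than one invocation of the edge inequality, and carrying this through while keeping the constants aligned with the face-counting of half (a) is the technical heart of the proof.
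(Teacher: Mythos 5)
Your half (a) is correct and coincides with the paper's own argument (Lemma~\ref{SamAver}): the same three counts --- $2\|G\|=\sum_{i\ge g} i f_i(G)\ge g f(G)$ in general, $g f_g(G)\le |G|$ when no two $g$-faces share a vertex, and $g f_g(G)\le \|G\|$ when no two $g$-faces share an edge --- are fed into Euler's formula. Your structural observation that each right-hand side of the theorem is exactly $2\bar d-1$, where $\bar d$ is the corresponding average-degree bound, is also precisely how the paper assembles the theorem.

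The gap is in half (b), and it is genuine. The bridge from an average-degree bound to a bondage bound is not something to be re-derived from the local edge estimate: it is a known theorem of Hartnell and Rall \cite{HR1}, quoted in the paper as Lemma~\ref{HR1}, stating that $b(G)\le 2\,ad(G)-1$ for \emph{every} graph $G$. Given that lemma, Theorem~\ref{edno} follows immediately from the average-degree bounds, and that is the paper's entire proof. Your diagnosis of why the edge inequality $b(G)\le \deg(u)+\deg(v)-1-|N(u)\cap N(v)|$ cannot do this job is accurate: averaging it over edges gives only $\frac1m\sum_v\deg(v)^2-1$, which by Cauchy--Schwarz is at least $2\,ad(G)-1$ and typically larger, and your $K_{2,3}$ computation shows that no single edge need do better. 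But having identified the obstacle, you stop exactly where a proof is required --- the ``finer local argument'' is gestured at but never supplied, and it cannot be a routine refinement, since $b(G)\le 2\,ad(G)-1$ is the main result of the Hartnell--Rall paper, established there by its own argument rather than by optimizing the edge bound. As written, your proposal proves the paper's Lemma~\ref{SamAver} but not the theorem; to close the gap you must either invoke \cite{HR1} as the paper does, or reproduce its proof.
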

\begin{rmk}
If $G$ is a planar graph with girth $g \geq 4+i$, $i \in \{0,1,2\}$
 then Theorem \ref{edno}(i) leads to $b(G) \leq 6-i$. 
These results were first proved by Fischermann et al.\cite{FRV}.
\end{rmk}
Recently, the following results on bondage number of graphs on surfaces were obtained.
\begin{thm}[Gagarin and Zverovich~\cite{GagarinZverovich}]
\label{thm:GZ}
Let $G$ be a graph embeddable on an orientable surface of 
genus $h$ and a non-orientable surface of genus $k$. Then
$b(G)\leq\min\{\Delta(G)+h+2,\Delta(G)+k+1\}$.
\end{thm}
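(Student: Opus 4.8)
The plan is to reduce the theorem to a single local estimate for $b(G)$ and then extract the decisive vertex from Euler's formula. The local tool I would use is the classical edge bound, a refinement of the bound of \cite{FJKR}: for every edge $uv \in E(G)$,
\[
b(G) \le \deg_G(u) + \deg_G(v) - 1 - |N_G(u) \cap N_G(v)|,
\]
which one proves by deleting all edges meeting $u$ or $v$ except $uv$ and except one edge to each common neighbour, and checking that every dominating set of the resulting graph is forced to grow. Since deleting isolated vertices and treating components separately cannot raise $b(G)$, I may assume $G$ is connected with at least one edge. It then suffices to exhibit a \emph{single} edge $uv$ with $\deg_G(u) + \deg_G(v) - |N_G(u) \cap N_G(v)| \le \Delta(G) + h + 3$ in the orientable case, and $\le \Delta(G) + k + 2$ in the non-orientable case; the edge bound then delivers the two halves of the minimum.

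To bring in the topology I would fix a minimum orientable (resp.\ non-orientable) genus embedding of $G$, which for a connected graph is automatically a $2$-cell embedding, so that Euler's formula $|G| - \|G\| + f(G) = \chi$ applies with $\chi = 2 - 2h$ (resp.\ $\chi = 2 - k$), and passing to the minimum genus only sharpens the bound. The key observation is geometric: a triangular face incident with an edge $uv$ produces a vertex of $N_G(u) \cap N_G(v)$, so an edge bordering \emph{two} triangular faces satisfies $|N_G(u) \cap N_G(v)| \ge 2$. Thus if I can locate a vertex $v$ of degree at most $h+5$ (resp.\ $k+4$) that is incident with two triangular faces sharing a common edge $uv$, then with $\deg_G(u)\le\Delta(G)$ the edge bound yields $b(G)\le \Delta(G)+(h+5)-1-2 = \Delta(G)+h+2$, and symmetrically $\Delta(G)+k+1$.

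The argument would then split on girth. If $G$ is triangle-free, every face has degree at least $4$, so $4f(G)\le 2\|G\|$ and Euler's formula gives $\|G\|\le 2|G|-2\chi$; this already forces $\delta(G)\le h+3$ (resp.\ $\delta(G)\le k+2$) for all but a few small orders, and then the crude consequence $b(G)\le\delta(G)+\Delta(G)-1$ of the edge bound suffices. If $G$ contains a triangle, I would instead run a discharging argument on the $2$-cell embedding — assigning each vertex charge $\deg-6$ and each face charge $2\deg-6$, whose total charge is exactly $-6\chi$ — to show that some low-degree vertex must be incident with two consecutive triangular faces, where the generic Euler bound $\delta(G)\le 6-6\chi/|G|\le h+5$ (resp.\ $\le k+4$) controls the degree. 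Small orders, forests (where $b(G)=1$), and the borderline values of $\chi$ would be dispatched directly, using $b(G)\le|G|-1$ where convenient.

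I expect the discharging bookkeeping of the triangle case to be the main obstacle. The difficulty is that the common-neighbour term must compensate for the degree, so one cannot simply take a global minimum-degree vertex: such a vertex may lie in no triangle even when $G$ has triangles elsewhere. One must instead prove that \emph{two} triangular faces accumulate at a vertex of controlled degree (a single triangle yields only one common neighbour and loses the sharp constant), while keeping the larger endpoint degree absorbed cleanly into $\Delta(G)$ and tracking the two different dependences on $\chi$ — coming from $\chi=2-2h$ versus $\chi=2-k$ — so that orientability is rewarded with the additive term $h+2$ rather than the weaker $k+1$. Designing the discharging rules so that these three demands are met simultaneously, and verifying that no redistribution of charge leaves a light double-triangular vertex absent, is where the real work lies.
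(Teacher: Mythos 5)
First, a framing remark: the paper does not prove this statement --- Theorem~\ref{thm:GZ} is quoted from Gagarin and Zverovich~\cite{GagarinZverovich} without proof --- so your proposal can only be judged on its own terms. Its skeleton is sound and is in fact the right one (the edge bound $b(G)\le \deg_G(u)+\deg_G(v)-1-|N_G(u)\cap N_G(v)|$ combined with Euler's formula is also the engine of the original proof, and your triangle-free branch checks out numerically in both the orientable and non-orientable cases). But the decisive step is both missing and defective as formulated. You split on whether $G$ \emph{contains a triangle}, yet the configuration you need lives in the embedding, not in the abstract graph: a $3$-cycle on a surface need not bound a face (it may even be noncontractible), so a graph with triangles can have a minimum-genus embedding with no triangular faces at all, in which case no discharging can produce a vertex incident with two edge-sharing $3$-faces, because none exists. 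That particular failure is repairable (if every face has degree at least $4$, your girth-free computation applies verbatim, so the split should be on face degrees), but two others are not so easily patched. Even when an edge $uv$ does lie on two triangular faces, the two apexes can coincide --- a single triangle bounding two faces, as with $K_3$ in the sphere --- so $|N_G(u)\cap N_G(v)|\ge 2$ does not follow without an extra argument. And there is an uncovered intermediate regime: an embedding can have triangular faces while no edge lies on two of them; there one only gets roughly $ad(G)\le \frac{24}{5}\left(1-\frac{\chi}{|G|}\right)$, which yields $\delta(G)\le h+4$ or worse for small genus --- too large for your ``crude'' branch ($\delta\le h+3$) and outside your double-triangle branch. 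The missing interpolation (one $3$-face at a light vertex, one common neighbour, degree threshold $h+4$) is precisely where the case analysis of the real proof lives, and your proposal acknowledges but does not perform it.

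The second structural problem is that the discharging paradigm you invoke does not transfer from the plane. With charges $\deg-6$ on vertices and $2\deg-6$ on faces the total charge is $-6\chi$, which is \emph{positive} as soon as $\chi<0$ --- that is, on every surface where the theorem says more than its spherical case. Planar discharging extracts an unavoidable configuration from negative total charge; with positive total charge nothing forces a light double-triangular vertex to exist, and correct statements must instead degrade with $-\chi/|G|$, as in the bound $\delta(G)\le 6-6\chi/|G|$ you quote. Consequently your threshold $\delta(G)\le h+5$ is only available once $|G|$ is large relative to the genus, and the residual small-order cases are not trivially ``dispatched'': for $h=0$ your claim specializes to Kang and Yuan's theorem that $b(G)\le\Delta(G)+2$ for planar graphs, whose known proofs already require a delicate structural analysis, and $b(G)\le|G|-1$ is weaker than $\Delta(G)+h+2$ precisely in the low-$\Delta$, low-$h$ range where those difficulties concentrate. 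So what you have is a correct reduction plus an accurate map of where the difficulty sits, but the theorem's actual content --- producing, in every $2$-cell embedding, a light edge whose degree excess is fully offset by common neighbours, with the orientable/non-orientable asymmetry $h+2$ versus $k+1$ falling out of $\chi=2-2h$ versus $\chi=2-k$ --- remains unproved.
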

\begin{thm}[Jia Huang~\cite{Jia Huang}]
\label{thm:H21}
Let $G$ be a graph embeddable on a surface whose Euler
characteristic $\chi$ is as large as possible. If $\chi\leq0$
then $b(G)<\Delta(G)+\sqrt{12-6\chi}+1/2$. If $\chi\leq0$  
then $b(G) \leq \Delta (G) + \frac{\sqrt{8g(2-g)\chi + (3g-2)^2} -g +6}{2(g-2)}$.
\end{thm}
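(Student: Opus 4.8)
The plan is to derive both inequalities from the standard \emph{light-edge} bound for the bondage number: for every edge $uv\in E(G)$,
\[
b(G)\;\le\;d(u)+d(v)-1-|N(u)\cap N(v)|,
\]
which follows from a minimum-dominating-set argument (delete a suitable set of $d(u)+d(v)-1-|N(u)\cap N(v)|$ edges around $u$ and $v$). Since $d(u)+d(v)-|N(u)\cap N(v)|=|N(u)\cup N(v)|$ and one endpoint has degree at most $\Delta(G)$, this reads $b(G)\le\Delta(G)+|N(v)\setminus N(u)|-1$ for the appropriate orientation. Thus the whole problem reduces to exhibiting a \emph{single} edge $uv$ whose endpoints share many neighbours, i.e.\ for which the number of private neighbours $|N(v)\setminus N(u)|$ is small. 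Because the embedding realises the largest possible Euler characteristic, it is necessarily $2$-cell (Youngs), and after reducing to a connected $G$ we may use Euler's formula $|G|-\|G\|+f(G)=\chi$ together with the face-degree bound $\deg(\phi)\ge g$ coming from the girth.

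The first step is to set up the global count, the point being that a light edge with few private neighbours is plentiful exactly when the embedding has many short faces, since a short face incident with $uv$ forces a common neighbour. I therefore plan a discharging argument based on the combined Euler relation
\[
\sum_{w\in V(G)}\Bigl(d(w)-\tfrac{2g}{g-2}\Bigr)\;+\;\tfrac{2}{g-2}\sum_{\phi\in F(G)}\bigl(\deg(\phi)-g\bigr)\;=\;-\tfrac{2g}{g-2}\,\chi ,
\]
in which the face term is nonnegative (as $\deg(\phi)\ge g$) and may be discarded; for $g=3$ this is the familiar $\sum_w(d(w)-6)+\sum_\phi(2\deg(\phi)-6)=-6\chi$. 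Redistributing the degree and face excesses onto the edges will show that if every edge were heavy, say $d(u)+d(v)-1-|N(u)\cap N(v)|>\Delta(G)+x$ for all $uv$, then the surface would have to carry more charge than $-\tfrac{2g}{g-2}\chi$ allows. The essential local input is that a vertex of large degree cannot have all of its incident faces long: Euler forces its link $G[N(v)]$ to contain many edges, hence forces a neighbour $u$ with $|N(u)\cap N(v)|$ large, and this is precisely what makes an incident edge light.

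The numerical thresholds emerge by balancing two regimes. When degrees are small one uses the bare inequality $b(G)\le\Delta(G)+\delta(G)-1$; when degrees are large, the forced common neighbours supply a saving that grows with the degree; the crossover is controlled by a quadratic in $x=b(G)-\Delta(G)$. For the general (girth-$3$) bound (i) this quadratic is $x^{2}-x=\tfrac{47}{4}-6\chi$, whose positive root is $\tfrac12+\sqrt{12-6\chi}$; since $x$ is an integer while the threshold is never integral (it is irrational or a half-integer), the inequality is strict, giving $b(G)<\Delta(G)+\sqrt{12-6\chi}+\tfrac12$. For the girth-$g$ bound (ii) the same scheme with the girth-adjusted relation yields $(g-2)x^{2}+(g-6)x=2(g+2)-2g\chi$, whose positive root is exactly $\frac{\sqrt{8g(2-g)\chi+(3g-2)^{2}}-g+6}{2(g-2)}$; putting $g=3$ gives a weaker value than (i), which is consistent with (ii) being the bound of interest for $g\ge 4$.

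The main obstacle will be the discharging in the large-degree regime, namely converting ``$v$ has high degree, so Euler forbids all its incident faces from being long'' into a usable lower bound on $\max_{u\in N(v)}|N(u)\cap N(v)|$, uniformly over all surfaces and all admissible girths. A naive averaging over the neighbours of a single minimum-degree vertex does not suffice, because one high-degree neighbour spoils the estimate and the extremal configuration need not be located at a minimum-degree vertex; the light edge must instead be found by a genuinely global redistribution of charge. Managing that bookkeeping—in particular the half-integer constants responsible for the $+\tfrac12$ and for the strictness in (i)—is the delicate part. Once the correct quadratic threshold has been extracted, solving it and reading off the positive root is routine.
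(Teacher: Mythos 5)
A preliminary point: the paper contains no proof of Theorem~\ref{thm:H21} to compare against. It is quoted from Huang's paper~\cite{Jia Huang} solely for comparison with Theorem~\ref{edno}, and the machinery this paper actually develops (Lemma~\ref{HR1}, $b(G)\le 2ad(G)-1$, combined with the average-degree estimates of Lemma~\ref{SamAver}) could not in principle reprove it, since it never refers to $\Delta(G)$ and so cannot produce a bound of the form $\Delta(G)+O(\sqrt{-\chi})$. (Note also that the statement as printed has a typo you inherited: the second hypothesis should introduce the girth $g=g(G)\ge 3$ rather than repeat ``$\chi\le 0$''.) Your choice of tools is nevertheless the historically correct one: Huang's actual argument, like Gagarin--Zverovich's before it, does run through the Hartnell--Rall light-edge bound $b(G)\le d(u)+d(v)-1-|N(u)\cap N(v)|$ together with Euler's formula, so at the level of strategy you are on the right track.

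However, what you have written is a plan, not a proof, and the gap sits exactly where you yourself flag it. The entire quantitative content of the theorem is the step you defer: converting the assumption ``every edge $uv$ satisfies $d(u)+d(v)-1-|N(u)\cap N(v)|>\Delta(G)+x$'' into a charge excess contradicting the Euler identity $\sum_{w}(d(w)-\tfrac{2g}{g-2})+\tfrac{2}{g-2}\sum_{\phi}(\deg(\phi)-g)=-\tfrac{2g}{g-2}\chi$. You give no discharging rules, no local structure lemma bounding $\max_{u\in N(v)}|N(u)\cap N(v)|$ from below at high-degree vertices, and no accounting of how edge-heaviness forces $\delta(G)>x$ while limiting the number of $g$-faces; you even correctly observe that naive averaging at a minimum-degree vertex fails, which is an admission that the argument is open, not a substitute for it. The two quadratics $x^2-x=\tfrac{47}{4}-6\chi$ and $(g-2)x^2+(g-6)x=2(g+2)-2g\chi$ are reverse-engineered from the answers rather than derived: their positive roots are indeed $\tfrac12+\sqrt{12-6\chi}$ and $\frac{\sqrt{8g(2-g)\chi+(3g-2)^2}-g+6}{2(g-2)}$, so the algebra is consistent with the target, but consistency is not a derivation. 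Two smaller debts: (a) ``every face has degree at least $g$'' needs justification beyond the girth hypothesis, since a boundary walk of a 2-cell embedding may repeat edges; and (b) your strictness observation for the first bound is sound as arithmetic --- $\tfrac{1+\sqrt{48-24\chi}}{2}$ is never an integer, because if $48-24\chi$ is a perfect square its root is even --- but it only becomes available once the non-strict inequality is established, which is precisely the missing core.
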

\begin{rmk}
In many cases the bound stated in Theorem \ref{edno}(i)  
is better than those given by Theorems \ref{thm:GZ} and \ref{thm:H21}. 
 Indeed, it is easy to see that if $\chi \leq 0$ then: 
 \begin{itemize}
\item[(a)] $s(\chi,g,|G|) < z(\Delta, h,k)$ at least when  both $\Delta (G) \geq \frac{8}{g-2}$ and $|G| > 8 + \frac{16}{g-2}$ hold;
 \item[(b)] $s(\chi,3,|G|) < j_1(\Delta, \chi)$ at least when both $\Delta (G) \geq 11$ and $-\frac{|G|^2}{24}\leq \chi$ hold;
\item[(c)] $s(\chi,g,|G|) < j_2(\Delta, \chi, g)$ at least when both
   $\Delta (G) \geq \frac{7}{2} + \frac{6}{g-2}$ and $-\frac{|G|^2}{8}(1 - \frac{2}{g}) \leq \chi$ hold,
 \end{itemize}
where (under the notation of Theorems \ref{edno}, \ref{thm:GZ} and \ref{thm:H21}): 
 $s(\chi,g,|G|) = 3 + \frac{8}{g-2} - \frac{4\chi g}{|G|(g-2)}$, 
$z(\Delta, h,k) = \min\{\Delta(G)+h+2,\Delta(G)+k+1\}$, 
$j_1(\Delta, \chi) = \Delta(G)+\sqrt{12-6\chi}+1/2$ and 
$j_2(\Delta, \chi, g) = \Delta (G) + \frac{\sqrt{8g(2-g)\chi + (3g-2)^2} -g +6}{2(g-2)}$. 
\end{rmk}

\section{Proof of the main result}
The average degree of a graph $G$ is defined as $ad(G) = 2\|G\|/|G|$.  
For the proof of Theorem \ref{edno} we needs the following lemmas. 
\begin{lem}[Hartnell and Rall~\cite{HR1}]\label{HR1}
For any graph $G$, $b(G) \leq 2ad(G) -1$.
\end{lem}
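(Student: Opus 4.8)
The plan is to prove the bound by a local edge-deletion construction that forces the domination number up, and then to choose where to apply it by an extremal argument on the degree sequence. The basic tool is the following: for a set $S \subseteq V(G)$, deleting every edge incident with $S$ turns each vertex of $S$ into an isolated vertex, so in the resulting graph $G'$ every dominating set must contain $S$, whence $\gamma(G') = |S| + \gamma(G - S)$. The number of edges deleted is $\sum_{s \in S} \deg(s)$ minus the number of edges inside $S$, so $b(G) \le \sum_{s\in S}\deg(s) - |E(G[S])|$ \emph{provided} $\gamma(G-S) \ge \gamma(G) - |S| + 1$, which is exactly the condition that isolating $S$ strictly increases the domination number. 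The prototype is $|S| = 1$, giving $b(G) \le \deg(v)$ whenever $\gamma(G - v) \ge \gamma(G)$, and $|S| = 2$ with an edge $uv$, giving the Fink--Jacobson--Kinch--Roberts type bound $b(G) \le \deg(u) + \deg(v) - 1$ whenever $\gamma(G - \{u,v\}) \ge \gamma(G) - 1$.

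It remains to locate a set $S$ whose edge-cost is at most $2\,ad(G) - 1 = 4\|G\|/|G| - 1$ and for which the domination condition holds. A naive averaging over single edges is useless here, and in fact points the wrong way: since $\sum_{xy \in E(G)}(\deg x + \deg y) = \sum_{z} \deg(z)^2 \ge (2\|G\|)^2/|G|$ by Cauchy--Schwarz, the \emph{average} degree-sum of an edge is at least $2\,ad(G)$, with equality only in the regular case. I would therefore anchor the construction at a vertex $v$ of minimum degree, which satisfies $\deg(v) \le ad(G)$, and analyse its neighbourhood. If $v$ has a neighbour $w$ with $\deg(v) + \deg(w) \le 2\,ad(G)$, then the two-vertex gadget on $\{v,w\}$ is a candidate of the right cost; if instead all neighbours of $v$ have large degree, as happens for a star, one takes $S$ to consist of two vertices of small degree at distance two (for a star, two leaves), or more generally an independent set of low-degree vertices whose total degree is controlled by $2\,ad(G)$.

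The step I expect to be the main obstacle is guaranteeing that the chosen set $S$ really does increase the domination number, i.e.\ that $\gamma(G - S)$ does not drop below $\gamma(G) - |S| + 1$. This is a genuine difficulty and not a formality: in a vertex-domination-critical graph such as $C_4$, isolating a single (minimum-degree) vertex leaves $\gamma$ unchanged, and in $P_4$ no single edge deletion raises $\gamma$ at all, so one is forced to isolate two leaves simultaneously; in both examples the bound $2\,ad(G) - 1$ is attained, equalling $3$ and $2$ respectively, which shows that the construction must be essentially optimal. The core of the argument is thus a case analysis, organised around the degrees in the neighbourhood of a minimum-degree vertex, that in every case exhibits a set $S$ satisfying both the cost bound $\sum_{s \in S}\deg(s) - |E(G[S])| \le 2\,ad(G) - 1$ and the criticality bound $\gamma(G - S) \ge \gamma(G) - |S| + 1$. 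Balancing these two competing requirements --- keeping the deleted edge count small while ensuring the removed vertices are collectively inessential to domination --- is where the proof must be most careful.
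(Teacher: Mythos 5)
Your setup is sound: isolating a set $S$ costs $\sum_{s\in S}\deg(s)-|E(G[S])|$ edges, forces $\gamma(G')=|S|+\gamma(G-S)$, and your observation that averaging $\deg u+\deg v$ over edges points the wrong way (it is at least $2\,ad(G)$ by convexity) is correct and important. But the proposal stops exactly where the proof has to start. The entire content of the Hartnell--Rall bound is the existence, in \emph{every} graph (without isolated vertices), of a set $S$ meeting the cost bound $2\,ad(G)-1$; you announce that this will follow from ``a case analysis organised around the degrees in the neighbourhood of a minimum-degree vertex,'' but you never carry it out, and the examples you give ($C_4$, $P_4$, stars) only confirm that the bound is tight, not that a suitable $S$ always exists. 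Note also that the paper under review does not prove this lemma at all --- it quotes it from Hartnell and Rall --- so the missing step cannot be borrowed from the surrounding text; it is precisely what a proof must supply.

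Moreover, you have mislocated the difficulty. For $S=\{u,v\}$ with $\mathrm{dist}(u,v)\le 2$, the ``criticality'' condition you worry about is automatic: if $D$ dominates $G-\{u,v\}$, then $D\cup\{u\}$ (when $uv\in E(G)$) or $D\cup\{w\}$ (when $w$ is a common neighbour) dominates $G$, so $\gamma(G-\{u,v\})\ge\gamma(G)-1$ holds for \emph{every} such pair; this is why the Fink--Jacobson--Kinch--Roberts bound $b(G)\le \deg(u)+\deg(v)-1$ for vertices at distance at most $2$ is unconditional, not a bound ``whenever'' something holds. The genuine obstacle is the purely combinatorial claim: every graph with $\delta(G)\ge 1$ contains two distinct vertices at distance at most $2$ with $\deg(u)+\deg(v)\le 4\|G\|/|G|$. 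This needs a real argument, for instance: set $c=4\|G\|/|G|$ and suppose every pair at distance at most $2$ has degree sum exceeding $c$; let $A$ be the set of vertices of degree at most $c/2$; then $A\ne\emptyset$ (else the average degree would exceed $c/2=ad(G)$), and $A$ is a $2$-packing, so the closed neighbourhoods $N[u]$, $u\in A$, are pairwise disjoint; summing degrees over $V(G)$, using $\deg(w)>c-\deg(u)$ for $w\in N(u)$ and $\deg(z)>c/2$ for $z\notin\bigcup_{u\in A}N[u]$, yields $2\|G\| > 2\|G\| + \sum_{u\in A}\bigl(\deg(u)-1\bigr)\bigl(c/2-\deg(u)\bigr) \ge 2\|G\|$, a contradiction. (The hypothesis $\delta\ge 1$ enters through the factor $\deg(u)-1\ge 0$, and it is not cosmetic: the stated inequality fails for $K_2\cup K_1$, so any correct proof must use it.) Combining this existence claim with the unconditional distance-$2$ bound finishes the proof; without some argument of this kind, what you have is a plan, not a proof.
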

\begin{lem} \label{SamAver}
Let $G$ be a connected graph embeddable on a surface whose Euler
characteristic $\chi$ is as large as possible and let $g(G) = g < \infty$. 
\begin{enumerate}
\item[(i)] Then $ad(G) \leq \frac{2g}{g-2}(1-\frac{\chi}{|G|})$.
\item[(ii)] If $G$ contains no intersecting $g$-faces, then 
$ad(G) \leq 2 + \frac{4g+2}{g^2-g}-2(1+\frac{2}{g-1})\frac{\chi}{|G|}$.
\item[(iii)] If G contains no adjacent $g$-faces, then
$ad(G) \leq \frac{2g(g+1)}{g^2-g-1}(1-\frac{\chi}{|G|})$.
\end{enumerate}
\end{lem}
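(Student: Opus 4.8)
The plan is to combine the generalized Euler formula with an edge--face incidence count, so the starting point is to observe that an embedding realizing the largest possible Euler characteristic is necessarily a $2$-cell (cellular) embedding; only then does the identity $|G|-\|G\|+f(G)=\chi$ from the excerpt apply. Granting this, the workhorse is the double count $2\|G\|=\sum_{\phi\in F(G)}\deg(\phi)$, together with the girth bound $\deg(\phi)\geq g$ for every face $\phi$ (each facial walk of a simple graph with a cycle has length at least $g$). The three parts will differ only in how sharply the contribution of the faces of degree exactly $g$ is controlled.

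For part (i) I would simply write $2\|G\|=\sum_\phi\deg(\phi)\geq g\,f(G)$, hence $f(G)\leq 2\|G\|/g$, and substitute into Euler's formula to get $\chi\leq |G|-\|G\|\frac{g-2}{g}$. Solving for $\|G\|$ yields $\|G\|\leq \frac{g}{g-2}(|G|-\chi)$, and dividing by $|G|/2$ gives exactly $ad(G)\leq\frac{2g}{g-2}(1-\frac{\chi}{|G|})$. The parts (ii) and (iii) refine the face count by splitting off the number $f_g$ of $g$-faces: since every remaining face has degree at least $g+1$, one has $2\|G\|=\sum_\phi\deg(\phi)\geq g\,f_g+(g+1)(f(G)-f_g)=(g+1)f(G)-f_g$, whence $f(G)\leq\frac{2\|G\|+f_g}{g+1}$. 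The task is then to bound $f_g$ and feed the result into Euler's formula.

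The two hypotheses supply precisely the two needed bounds on $f_g$. If no two $g$-faces intersect, their vertex sets are pairwise disjoint, so $g\,f_g\leq |G|$, i.e.\ $f_g\leq |G|/g$; substituting into $f(G)\leq\frac{2\|G\|+|G|/g}{g+1}$ and then into Euler's formula gives $\chi\leq |G|(1+\frac{1}{g(g+1)})-\|G\|\frac{g-1}{g+1}$, and solving for $ad(G)=2\|G\|/|G|$ produces the stated $2+\frac{4g+2}{g^2-g}-2(1+\frac{2}{g-1})\frac{\chi}{|G|}$. If instead no two $g$-faces are adjacent, then each edge bounds at most one $g$-face, so counting edge--$g$-face incidences gives $g\,f_g\leq\|G\|$, i.e.\ $f_g\leq\|G\|/g$; here $f(G)\leq\frac{\|G\|(2g+1)}{g(g+1)}$, Euler's formula yields $\chi\leq |G|-\|G\|\frac{g^2-g-1}{g(g+1)}$, and solving gives $ad(G)\leq\frac{2g(g+1)}{g^2-g-1}(1-\frac{\chi}{|G|})$, as claimed.

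I expect the only genuinely delicate point to be the justification that every facial walk has length at least $g$ (and, in particular, that there are no faces of degree $1$ or $2$): this uses simplicity of $G$ to exclude loops and parallel edges, and the fact that $g<\infty$ forces $G$ to contain a cycle so that each facial boundary walk either is a cycle of length $\geq g$ or strictly contains one. The disjointness counts $g\,f_g\leq|G|$ and $g\,f_g\leq\|G\|$ are immediate from the hypotheses once ``intersecting'' and ``adjacent'' are read as sharing a vertex and sharing an edge, respectively; the remaining work is the routine algebraic rearrangement of the three inequalities, which I would carry out only to the extent needed to match the stated coefficients.
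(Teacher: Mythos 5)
Your proposal is correct and follows essentially the same route as the paper: the generalized Euler formula combined with the face-degree double count $2\|G\|=\sum_i i f_i(G)\geq(g+1)f(G)-f_g(G)$, and the same two bounds $gf_g(G)\leq|G|$ (vertex-disjointness of non-intersecting $g$-faces) and $gf_g(G)\leq\|G\|$ (edge-disjointness of non-adjacent $g$-faces); the only difference is that you solve for $f(G)$ and substitute into Euler's formula, whereas the paper substitutes Euler's expression for $f(G)$ into the degree-sum inequality, a trivially equivalent rearrangement. If anything, you are more careful than the paper in flagging the need for the maximal-$\chi$ embedding to be $2$-cell and for every face degree to be at least $g$, points the paper's proof takes for granted.
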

\begin{proof}
By Euler's formula, $f(G) = \chi - |G| + \frac{1}{2}ad(G)|G|$.

(i) We have $ad(G)|G| = 2\|G\| = \Sigma_{i \geq g} if_i(G) \geq gf(G) = g(\chi - |G| + \frac{1}{2}ad(G)|G|)$ 
and the result easy follows.

(ii) Since $G$ contains no intersecting $g$-faces, each vertex is incident to at most one $g$-face.
This implies $gf_g(G) \leq |G|$. Hence 
$ad(G)|G| = 2\|G\| = \Sigma_{i \geq g} if_i(G) \geq (g+1)f(G) - f_g(G) \geq
 (g+1)(\chi - |G| + \frac{1}{2}ad(G)|G|) - \frac{|G|}{g}$.
 After a short computation, the result follows. 

(iii) Since G contains no adjacent $g$-faces, it follows that $gf_g(G) \leq \|G\| = \frac{1}{2}ad(G)|G|$.  
 Hence $ad(G)|G| %= 2\|G\| = \Sigma_{i \geq g} if_i(G) 
\geq (g+1)f(G) - f_g(G) \geq
 (g+1)(\chi - |G| + \frac{1}{2}ad(G)|G|) - \frac{1}{2g}ad(G)|G|$. 
After some obvious manipulations we obtain the result.
\end{proof}
It remains to note that Theorem \ref{edno} follows by combining 
Lemmas \ref{HR1} and \ref{SamAver}.

\end{document}